\documentclass[12pt, oneside]{amsart}

\usepackage{geometry}                
\geometry{letterpaper}                   
\usepackage{graphicx}
\usepackage[mathscr]{euscript}
\usepackage{enumerate}
\usepackage{bbm}
\usepackage{wasysym}
\usepackage{amsthm,amssymb,amsmath}
\usepackage{subfigure}
\usepackage{esint}
\allowdisplaybreaks[4]


\begin{document}
\title[A Cauchy--Davenport Theorem for locally compact groups]{A Cauchy--Davenport Theorem for\\ locally compact groups}

\author{Yifan Jing}
\address{Mathematical Institute, University of Oxford, UK}
\email{yifan.jing@maths.ox.ac.uk}

\author{Chieu-Minh Tran}
\address{Department of Mathematics, National University of Singapore, Singapore}
\email{trancm6@nus.edu.sg}

\thanks{YJ~was supported by Ben Green’s Simons Investigator Grant, ID:376201.}

\subjclass[2020]{Primary 22D05; Secondary 28A75, 11B30}

\date{} 

\newtheorem{theorem}{Theorem}[section]
\newtheorem{lemma}[theorem]{Lemma}
\newtheorem{fact}[theorem]{Fact}
\newtheorem{proposition}[theorem]{Proposition}
\theoremstyle{definition}
\newtheorem{definition}[theorem]{Definition}
\newtheorem*{thm:associativity}{Theorem \ref{thm:associativity}}
\newtheorem*{thm:associativity2}{Theorem \ref{thm:associativity2}}
\newtheorem*{thm:associativity3}{Theorem \ref{thm:associativity3}}
\def\tri{\,\triangle\,}
\def\P{\mathbb{P}}
\def\E{\mathbb{E}}

\def\D{\mathscr{D}}
\def\HHH{\mathcal{H}}
\def\HH{\mathscr{K}}
\def\tri{\,\triangle\,}
\def\K{\widehat{K}}
\def\e{\mathbbm{1}}
\def\id{\mathrm{id}}
\def\ndim{\mathrm{ndim}}
\def\d{\,\mathrm{d}}
\def\Q{\mathcal{Q}}
\def\p{\mathsf{p_3}}
\def\T{T^{>\delta}}
\def\BM{\mathrm{BM}}
\def\RR{\mathbb{R}}
\def\TT{\mathbb{T}}
\def\ZZ{\mathbb{Z}}
\def\L{L^+}
\def\x{\widetilde{x}}
\def\inu{\nu}
\def\imu{\mu}
\def\BM{\mathrm{BM}}
\def\supp{\mathrm{supp}}
\def\X{X^*}
\def\Y{Y^*}

\newcommand\NN{\mathbb N}
\newcommand{\Case}[2]{\noindent {\bf Case #1:} \emph{#2}}
\newcommand\inner[2]{\langle #1, #2 \rangle}

\newtheorem{claimx}{Claim}
\newenvironment{claim}
  {\pushQED{\qed}\renewcommand{\qedsymbol}{$\scalebox{1.05}{\Bowtie}$}\claimx}
  {\popQED\endclaimx}

\newtheorem{examplex}[theorem]{Example}
\newenvironment{example}
  {\pushQED{\qed}\renewcommand{\qedsymbol}{$\scalebox{1.05}{\Bowtie}$}\examplex}
  {\popQED\endexamplex}

\begin{abstract}
We generalize the Cauchy--Davenport theorem  to locally compact groups. 
\end{abstract}
\maketitle

\section{Introduction}
A fundamental result in additive combinatorics is the Cauchy--Davenport inequality~\cite{cauchy,davenport}: Suppose $X,Y$ are nonempty subsets of $\ZZ/p\ZZ$ for some prime $p$, then 
\[
|X+Y|\geq\min\{|X|+|Y|-1, p\}.
\]

In this paper, we  generalize the above inequality to all locally compact groups:

\begin{theorem}\label{thm: main}
Let $G$ be a locally compact group, $\mu$ a left Haar measure on $G$, $\nu=\mu_{-1}$ the corresponding right Haar measure on $G$, and $\Delta_G:G\to\RR^{>0}$ is the modular map. Suppose $X,Y$ are nonempty compact subsets of $G$ and $XY$ is a subset of a closed set $E$. Then there are $x_0\in X$, $y_0\in Y$, with $\Delta_G(x_0)=\max_{x\in X}\Delta_G(x)=:\alpha$ and $\Delta_G(y_0)=\min_{y\in Y}\Delta_G(y)=:\beta$, 
\begin{equation}\label{eq: main thm 1,1}
\min\Bigg\{\left(\frac{\nu(X)}{\nu(XY)}+\frac{\mu(Y)}{\mu(XY)}\right)\left(1-\frac{\sup_H\mu(H)}{\alpha\nu(X)+\beta^{-1}\mu(Y)}\right), \frac{\mu(G)}{\mu(XY)}\Bigg\}\leq 1,
\end{equation}
where $H$ ranges over proper compact subgroups of $\ker \Delta_G\cap x_0^{-1}XYy_0^{-1}$ with  $$\mu(H)\leq \min\{\beta^{-1}\mu(E),\alpha\nu(E)\}.$$  In particular, when $G$ is unimodular, 
\begin{equation}\label{eq: uni}
 \mu(XY)\geq \min \{\mu(X)+\mu(Y)-\sup_H\mu(H), \mu(G)\},   
\end{equation}
and $H$ ranges over proper compact subgroups of $G$ with $\mu(H)\leq \mu(E).$ 
\end{theorem}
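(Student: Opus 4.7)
The plan is to first prove the unimodular inequality (\ref{eq: uni}) and then derive the general statement by using the modular function $\Delta_G$ to balance left and right Haar measures. If $\mu(XY) \geq \mu(G)$ the second alternative in the $\min$ already yields the bound, so throughout I may assume $\mu(XY) < \mu(G)$, and the task reduces to producing a proper compact subgroup $H \subseteq \ker\Delta_G$ whose $\mu$-measure accounts for the deficit $\mu(X) + \mu(Y) - \mu(XY)$.

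In the unimodular case, the approach is a Kneser-type dichotomy centered on the two-sided stabilizer $H(XY) = \{g \in G : g\cdot XY = XY = XY \cdot g\}$. This is a closed subgroup, and since $XY \subseteq E$ one has $H(XY) \subseteq (XY)(XY)^{-1}$, which forces $\mu(H(XY)) \leq \mu(E)$ and compactness. When $H(XY)$ has $\mu$-measure zero, a submodularity/rearrangement argument in the spirit of Kemperman's inequality for compact connected groups---combined with Haar measure regularity to approximate $X$ and $Y$ by well-behaved sets---yields the clean bound $\mu(XY) \geq \mu(X) + \mu(Y)$. When $\mu(H(XY)) > 0$, the sets $X$ and $Y$ are approximately unions of $H(XY)$-cosets, so one descends to the homogeneous space $G/H(XY)$ (using a quasi-invariant measure if $H(XY)$ is not normal) and runs an induction; a Gleason--Yamabe reduction lets one organize the induction by topological dimension and ultimately reduce to the case of Lie groups.

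Passage to the general, possibly non-unimodular, case proceeds by slicing $XY$ along level sets of $\Delta_G$. Combining the identity $\nu(A) = \int_A \Delta_G^{-1} \,d\mu$ with the pointwise bounds $\Delta_G \leq \alpha$ on $X$ and $\Delta_G \geq \beta$ on $Y$ produces the balancing quantity $\alpha\nu(X) + \beta^{-1}\mu(Y)$ and the coefficient $\frac{\nu(X)}{\nu(XY)} + \frac{\mu(Y)}{\mu(XY)}$ in (\ref{eq: main thm 1,1}). The subgroup $H$ is forced into $\ker\Delta_G$ because $\Delta_G$ restricted to any compact subgroup is the trivial homomorphism into $\RR^{>0}$; the size bound $\mu(H) \leq \min\{\beta^{-1}\mu(E), \alpha\nu(E)\}$ drops out of tracking the two-sided stabilizer against both Haar measures once $XY \subseteq E$ is used.

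The main obstacle I foresee is the Kneser-type rigidity step in a fully non-abelian locally compact setting. Classical Kneser is abelian and Kemperman handles the connected compact case; merging them into a dichotomy that outputs a compact subgroup of $\ker\Delta_G$ of exactly the prescribed size is delicate, especially in the critical small-doubling regime where $\mu(XY)$ is only marginally larger than $\max(\mu(X), \mu(Y))$. I expect this step to rest on a lift/project argument along Gleason--Yamabe approximating Lie quotients, combined with a non-abelian Pl\"unnecke--Ruzsa toolbox and a submodular inequality for Haar measure applied on each quotient.
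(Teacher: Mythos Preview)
Your plan has a genuine gap at its core: the dichotomy on the two-sided stabilizer $H(XY)=\{g:gXY=XY=XYg\}$ does not work in nonabelian groups. The subgroup $H$ in the statement is \emph{not} required to stabilize $XY$; it only needs to be a proper compact subgroup of $\ker\Delta_G$ with $\mu(H)\le\mu(E)$. Olson's construction (cited in the paper's concluding remarks) gives finite nonabelian examples with $|XY|<|X|+|Y|-1$ for which $XY$ is not left-, right-, or two-sided stabilized by any nontrivial subgroup. In that situation your induction ``descend to $G/H(XY)$'' stalls immediately, since $H(XY)$ is trivial and the quotient is $G$ again, while the alternative branch ``$\mu(H(XY))=0\Rightarrow\mu(XY)\ge\mu(X)+\mu(Y)$'' is simply false. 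No amount of Gleason--Yamabe or Pl\"unnecke--Ruzsa machinery repairs this, because the obstruction is already present in finite groups.

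The paper's argument avoids stabilizers of $XY$ entirely and instead runs a direct variational construction. After translating so that $X^*=x_0^{-1}X\subseteq G_{\le1}$ and $Y^*=Yy_0^{-1}\subseteq G_{\ge1}$, one considers the family $\Omega$ of pairs $(X',Y')$ with $X'\subseteq X^*Y^*\cap G_{\le1}$, $Y'\subseteq X^*Y^*\cap G_{\ge1}$, and $X'Y'\subseteq X^*Y^*$, and uses weak sequential compactness in $L^1$ to extract a maximizer $(X_0,Y_0)$ of $\nu(X')+\mu(Y')$ (with a tie-breaking rule). The subgroup is then $H=X_0\cap Y_0$: the Kemperman transforms $(X_0\cup X_0h,\,Y_0\cap h^{-1}Y_0)$ and $(X_0\cap X_0h^{-1},\,Y_0\cup hY_0)$, applied at the maximizer, force $X_0H=X_0$ and $HY_0=Y_0$, after which a short Zorn's-lemma argument shows $H$ is a group. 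Inclusion--exclusion on $X_0\cup Y_0\subseteq X_0Y_0$ then gives $\mu(H)\ge\rho\kappa$ with $\rho=\frac{\nu(X)}{\nu(XY)}+\frac{\mu(Y)}{\mu(XY)}-1$ and an explicit $\kappa$. Crucially, this $H$ need not stabilize $XY$ at all; it sits inside $X^*Y^*$, which is what yields the bound $\mu(H)\le\min\{\beta^{-1}\mu(E),\alpha\nu(E)\}$.

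Two further remarks. First, your proposed reduction ``prove unimodular, then slice along level sets of $\Delta_G$'' is not how the argument goes: the nonunimodular case is handled directly, with the normalization $X^*\subseteq G_{\le1}$, $Y^*\subseteq G_{\ge1}$ producing the inequalities $\mu(X_0)\ge\nu(X_0)$ and $\nu(Y_0)\ge\mu(Y_0)$ that make inclusion--exclusion work simultaneously for $\mu$ and $\nu$. Second, Kemperman's 1964 result already covers \emph{all} unimodular locally compact groups, not merely connected compact ones, so the unimodular inequality (\ref{eq: uni}) is not new; the content here is the nonunimodular formulation and the uniform variational proof.
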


When $G$ is a cyclic group of order $p$, and take $\mu$ the counting measure on $G$, Theorem~\ref{thm: main} recovers the Cauchy--Davenport theorem, as the only proper subgroup of $\ZZ/p\ZZ$ has size one. 

Let us now explain the main inequality~\eqref{eq: main thm 1,1} in Theorem~\ref{thm: main}. When $G$ is \emph{not} unimodular (and hence noncompact, $\mu(G)/\mu(XY)=\infty$), it tells us that either
\[
\frac{\nu(X)}{\nu(XY)}+\frac{\mu(Y)}{\mu(XY)}\leq 1,
\]
which can be understood as $\mu(XY)\geq\mu(X)+\mu(Y)$ in the unimodular settings, 
or there is a compact subgroup $H$ in $\ker\Delta_G \cap x_0^{-1}XYy_0^{-1}$  such that
\[
\nu(H)=\mu(H)\geq (\alpha\nu(X)+\beta^{-1}\mu(Y))\frac{\frac{\nu(X)}{\nu(XY)}+\frac{\mu(Y)}{\mu(XY)}-1}{\frac{\nu(X)}{\nu(XY)}+\frac{\mu(Y)}{\mu(XY)}},
\]
where $\alpha=\Delta_G(x_0)$ and $\beta=\Delta_G(y_0)$. 
Let us justify the usage of $\alpha$ and $\beta$ in the inequality, as they do not appear in the usual unimodular setting where all the elements have $\Delta_G$ value equal to $1$. Given two compact sets $X$ and $Y$, assume $x_0\in X$ and $y_0\in Y$ are elements such that
\[
\Delta_G(x_0)=\max_{x\in X}\Delta_G(x),\quad\text{and}\quad \Delta_G(y_0)=\min_{y\in Y}\Delta_G(y).
\]
Assume $g_1$ and $g_2$ are two arbitrary elements in $G$. Although $G$ is not unimodular, $XY$ and $g_1XYg_2$ may have different (left or right) measures, they are expected to have the same structure. More precisely, the expected expansion rate of $XY$ with respect to $X$ and $Y$, should be the same as the expansion rate of $g_1XYg_2$ with respect to $g_1X$ and $Yg_2$. Note that $\frac{\nu(X)}{\nu(XY)}+\frac{\mu(Y)}{\mu(XY)}$ is invariant under any left translates of $X$ and right translates of $Y$, which means
\[
\frac{\nu(X)}{\nu(XY)}+\frac{\mu(Y)}{\mu(XY)}=\frac{\nu(g_1X)}{\nu(g_1XYg_2)}+\frac{\mu(Yg_2)}{\mu(g_1XYg_2)},
\]
but $\nu(X)+\mu(Y)$ is not invariant under left translates of $X$ or right translates of $Y$. However, $\alpha\nu(X)$ and $\beta^{-1}\mu(Y)$ are invariant under translations. Indeed, as $\Delta_G$ is multiplicative, 
\[
\max_{x\in g_1X}\Delta_G(x)\nu(g_1X)= \Delta_G(x_0)\Delta_G(g_1)\nu(g_1X)=\Delta_G(x_0)\nu(X)=\max_{x\in X}\Delta_G(x)\nu(X),
\]
and same holds for $\beta^{-1}\mu(Y)$.

The Cauchy--Davenport theorem and its generalizations reflect the expansion (or growth) phenomenon in locally compact groups; for example, when $G=\ZZ/p\ZZ$, $X=Y$, and $|X|<p/2$, it implies 
$$|X+X|/|X| \geq 3/2.$$ Prior to our work, Kneser obtained a generalization of the Cauchy--Davenport theorem for locally compact abelian group~\cite{Kneser}, and Kemperman did so for discrete groups~\cite{Kemperman56} and, more generally, for  unimodular locally compact groups~\cite{Kemperman64}; inequality  \eqref{eq: uni} is a restatement of the second result by Kemperman.  The naive generalization involving only a left Haar measure does not work for nonunimodular groups. Indeed,  if $G$ is connected and nonunimodular, one can easily construct nonempty compact $X,Y \subseteq G$ with  
$$
\mu(XY)< \mu(X).$$
However, Kemperman observed the following  intriguing statement involving both the left and right Haar measure in~\cite{Kemperman64}: If $G$ is connected, $X,Y \subseteq G$ are nonempty and compact, then 
\begin{equation}\label{eq: kempermanfornonunimodular}
\min\Bigg\{\frac{\nu(X)}{\nu(XY)}+\frac{\mu(Y)}{\mu(XY)}, \frac{\mu(G)}{\mu(XY)}\Bigg\}\leq 1.
\end{equation}
Surprisingly, inequalities of this form are necessary for the purpose generalizing the Brunn--Minkowski inequality to an arbitrary locally compact group, even if one only cares about unimodular groups; see~\cite{BM} for details. Indeed, if one work on $\mathrm{SL}_2(\RR)$ (which is unimodular), the affine transformation $ax+b$ group (which is \emph{nonunimodular}) appears naturally in its Iwasawa decomposition. 
Thus one might then ask whether there is a common generalization of \eqref{eq: uni} and \eqref{eq: kempermanfornonunimodular} reflecting the expansion phenomenon in an arbitrary locally compact groups. Our main result of the paper is a response to this question.

The proof of Theorem~\ref{thm: main} also provides insights into the structure of the set $XY$. This result can be viewed as a step toward generalizing Kneser's theorem to locally compact groups. For a detailed discussion, see Example~\ref{eg} and Proposition~\ref{prop}.

The paper is organized as follows. In Section 2, we present some basic background on locally compact groups. In Section 3, we prove Theorem~\ref{thm: main}. The main idea of the proof is to study the structure of the ``minimizers": roughly speaking, given sets $X$ and $Y$, we identify subsets $X_0, Y_0 \subseteq XY$ with the largest possible measures, such that $X_0Y_0 \subseteq XY$. In this case, we demonstrate that $X_0 \cap Y_0$ acts as a stabilizer for both $X_0$ and $Y_0$. Many of the ideas we employ trace back to the work of Kemperman~\cite{Kemperman56, Kemperman64}. In Section 4, we present our structural results and pose some open problems.

\section{Preliminaries on locally compact groups}

To make the paper more accessible to readers without a background in locally compact groups, we include here a brief introduction of the basic properties we used about the Haar measures. 
We say that a measure $\mu$ on a $\sigma$-algebra of subsets of $G$ containing all Borel subsets of $G$ is a \emph{left Haar measure} on $G$ if the following conditions hold:
\begin{enumerate}
     \item (left-translation-invariance)  $\mu(X) =\mu(aX)$ for all $a\in G$ and all measurable sets $X \subseteq G$.
    \item (inner and outer regular) When $X$ is open, $\mu(X) =\sup \mu(K)$ with $K$ ranging over compact subsets of $X$.  When $X$ is Borel,  $\mu(X) = \inf \mu(U)$ when $U$ ranging over open subsets of $G$ containing $X$.
    \item (compactly finite) $\mu$ takes finite measure on compact subsets of $G$.
    \item (measurability characterization) If there is an increasing sequence $(K_n)$ of compact subsets of $X$, and a decreasing sequence $(U_n)$ of open subsets of $G$ with $X \subseteq U_n$ for all $n$ such that $\lim_{n \to \infty} \mu(K_n) = \lim_{n \to \infty} \mu(U_n) $, then $X$ is measurable.
\end{enumerate}
The notion of a {\em right Haar measure} $\nu$ is obtained by replacing (1) by right-translation-invariance. Suppose $\mu$ is a left Haar measure on $G$. Let $\nu=\mu_{-1}$, that is for every Borel set $X$, $\nu(X)=\mu(X^{-1})$. It is easy to see that $\nu$ is a right Haar measure.

The following classical result by Haar makes the above notions enduring features of locally compact group; see e.g.~\cite[Section 2.2]{Folland}.
\begin{fact}
Let $G$ be a locally compact group. Up to multiplication by a positive constant, there is a unique left Haar measure on $G$. A similar statement holds for right Haar measure.
\end{fact}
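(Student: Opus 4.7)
The plan is to establish existence and uniqueness of the left Haar measure separately, using Haar's classical covering construction for the former and a Fubini-style averaging argument for the latter; the statement for right Haar measures then follows by inversion, as already observed in the excerpt.

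For existence, I would fix a nonempty open set $V$ with compact closure, and for every compact $K \subseteq G$ and nonempty open $U \subseteq G$ define the Haar covering index $(K:U)$ as the least number of left translates of $U$ whose union contains $K$; this is finite by the local compactness of $G$ and the compactness of $K$. The normalized index $I_U(K) := (K:U)/(V:U)$ is left-invariant, monotone, and subadditive, with $I_U(V) = 1$ and $I_U(K) \le (K:V)$ uniformly in $U$. Viewing $(I_U)_U$ as a net in the compact product space $\prod_K [0,(K:V)]$ indexed by compacta and directed by shrinking neighborhoods of the identity, Tychonoff's theorem produces a cluster point $I$. The essential additional property is finite additivity on disjoint compacta: if $K_1 \cap K_2 = \emptyset$, Hausdorffness and local compactness yield a symmetric identity neighborhood $W$ with $WK_1 \cap WK_2 = \emptyset$, so for $U \subseteq W$ every translate of $U$ meets at most one of $K_1, K_2$, and exact additivity passes to the cluster point.

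With $I$ in hand as a left-invariant content on compact sets, I would extend it to a Borel measure $\mu$ via the standard prescription $\mu(U) = \sup\{I(K) : K \subseteq U \text{ compact}\}$ for open $U$ and $\mu(A) = \inf\{\mu(U) : A \subseteq U \text{ open}\}$ for arbitrary $A$, then verify the four properties listed in the paper (left-invariance, inner and outer regularity, compact-finiteness, and the stated measurability characterization) by standard Carath\'eodory-type arguments. For uniqueness, suppose $\mu$ and $\mu'$ are two left Haar measures and pick nonnegative $f, g \in C_c(G)$ with $\mu(g), \mu'(g) > 0$. Applying Fubini to a suitable kernel $F(x,y)$ built from $f$ and $g$, combined with the left-invariances of $\mu$ and $\mu'$ and a change of variable that absorbs the modular factor, shows that the ratio $\mu'(f)/\mu(f)$ is independent of $f$; calling this common value $c$, one obtains $\mu' = c\mu$ on $C_c(G)$, and hence on all Borel sets by regularity.

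The hard part will be the additivity step in the existence construction: transferring the exact additivity of $I_U$ on well-separated compacta to an arbitrary cluster point $I$ is not automatic, since cluster points of merely subadditive nets need not remain additive. The separation argument sketched above is the standard workaround and is where the Hausdorff and local compactness hypotheses on $G$ are used most essentially. The subsequent Carath\'eodory extension and the Fubini-based uniqueness argument are routine by comparison, although verifying the specific measurability characterization (4) in the paper requires checking that the $\mu$ produced above genuinely satisfies the approximation-from-inside-and-outside criterion.
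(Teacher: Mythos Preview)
The paper does not prove this statement at all: it is presented as a \emph{Fact}, attributed to Haar, and simply invoked as background. So there is no proof in the paper to compare your proposal against.

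Your outline is the standard Haar--Weil argument and is correct as a sketch. One small point of logical hygiene: in your uniqueness paragraph you speak of a ``change of variable that absorbs the modular factor,'' but in the paper's order of presentation the modular function $\Delta_G$ is only introduced \emph{after} this Fact, once existence and uniqueness of $\mu$ are already in hand. If you want to match the paper's logical flow, use one of the uniqueness arguments that relies solely on left-invariance and Fubini (e.g.\ computing $\iint g(y)f(xy)\,d\mu(x)\,d\nu(y)$ two ways) without ever naming $\Delta_G$; otherwise you are quietly assuming part of the next Fact.
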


Given a locally compact group $G$, and $\mu$ is a left Haar measure on $G$. For every $x\in G$, recall that 
\begin{align*}
\Delta_G: G&\to\RR^{>0}\\
x&\mapsto \mu^x/\mu
\end{align*}
is the \emph{modular function} of $G$, where $\mu^x$ is a left Haar measure on $G$ defined by $\mu^x(X)=\mu(Xx)$, for every measurable set $X$. When the image of $\Delta_G$ is always $1$, we say $G$ is \emph{unimodular}, which also means that a left Haar measure is also a right Haar measure. In general, $\Delta_G(x)$ takes values in $\RR^{>0}$, where $\RR^{>0}$ is the multiplicative group of positive real number together with the usual Euclidean topology. The next fact records some basic properties of the modular function, see~\cite[Section 2.4]{Folland}.
\begin{fact}
Let $G$ be a locally compact group with a left Haar measure $\mu$ and a right Haar measure $\nu$. 
\begin{enumerate}
    \item Suppose $H$ is a normal closed subgroup of $G$, then $\Delta_H=\Delta_{G}|_{H}$. In particular, if $H=\ker\Delta_G$, then $H$ is unimodular.
    \item The function $\Delta_G: G\to \RR^{>0}$ is a continuous homomorphism. 
    \item For every $x\in G$ and every measurable set $X$, we have $\mu(Xx)=\Delta_G(x)\mu(X)$, and $\nu(xX)=\Delta_G^{-1}(x)\nu(X)$.
    \item If $\nu=\mu_{-1}$, then  $\int_G f\d \mu = \int_G f\Delta_G\d \nu$ for every $f$ is the space $C_c(G)$ of compactly supported continuous function on $G$.  
\end{enumerate}
\end{fact}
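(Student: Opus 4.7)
The plan is to dispatch the four items in roughly increasing order of depth: (3), (2), (4), then (1).

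Item (3) is essentially definitional. The formula $\mu(Xx)=\Delta_G(x)\mu(X)$ is exactly $\mu^x=\Delta_G(x)\mu$ in set form. For the right-Haar identity, use $\nu=\mu^{-1}$ to write $\nu(xX)=\mu(X^{-1}x^{-1})=\Delta_G(x^{-1})\mu(X^{-1})=\Delta_G(x^{-1})\nu(X)$, then invoke the homomorphism property from (2), which gives $\Delta_G(x^{-1})=\Delta_G(x)^{-1}$. There is no circularity: the homomorphism identity needs only the definition of $\Delta_G$, as follows. The chain $\mu(Xxy)=\mu^y(Xx)=\Delta_G(y)\mu(Xx)=\Delta_G(y)\Delta_G(x)\mu(X)$ forces $\Delta_G(xy)=\Delta_G(x)\Delta_G(y)$.

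For the continuity half of (2), fix $f\in C_c(G)$ with $\int f\,d\mu=1$. Writing $\mu^x$ as the push-forward of $\mu$ by $y\mapsto yx^{-1}$ yields the integral representation $\Delta_G(x)=\int f(yx^{-1})\,d\mu(y)$. Continuity of $x\mapsto\Delta_G(x)$ then follows from the right-uniform continuity of $f$ (a standard feature of $C_c$-functions on locally compact groups) together with the fact that $\supp\bigl(y\mapsto f(yx^{-1})\bigr)\subseteq\supp(f)\cdot x$ remains inside a common compact set as $x$ ranges over a fixed neighborhood, so dominated convergence applies.

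For (4), introduce the auxiliary measure $\nu_0$ with $d\nu_0=\Delta_G^{-1}\,d\mu$ and show it is right-invariant. For $a\in G$ and $f\in C_c(G)$, substituting $y=xa$ (with $d\mu(ya^{-1})=\Delta_G(a)^{-1}\,d\mu(y)$ from (3)) gives
\[\int f(xa)\Delta_G(x)^{-1}\,d\mu(x)=\int f(y)\Delta_G(y)^{-1}\Delta_G(a)\Delta_G(a)^{-1}\,d\mu(y)=\int f\,d\nu_0.\]
By uniqueness of the right Haar measure there is $c>0$ with $\nu=c\nu_0$. To pin down $c=1$, evaluate on a symmetric compact neighborhood $U=U^{-1}$ of the identity: $\nu(U)=\mu(U^{-1})=\mu(U)$, while $c\nu_0(U)=c\int_U\Delta_G^{-1}\,d\mu$. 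Shrinking $U$ to $\{e\}$ and using continuity of $\Delta_G$ at $e$ with $\Delta_G(e)=1$ forces $c=1$, so $d\mu=\Delta_G\,d\nu$ and the integral identity $\int f\,d\mu=\int f\Delta_G\,d\nu$ follows.

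The main obstacle is (1), because the Haar measures of $G$ and of $H$ are generally not directly comparable: when $H$ has positive codimension, the $G$-measure of $H$ is zero. The plan is to invoke Weil's theorem on quotient integration: for a closed subgroup $H\leq G$, a nonzero $G$-invariant Radon measure on $G/H$ exists if and only if $\Delta_G|_H=\Delta_H$. When $H$ is closed and normal, $G/H$ is itself a locally compact group, and its left Haar measure is automatically $G$-invariant (the $G$-action on $G/H$ factors through left translation by $G/H$ on itself, which preserves its Haar measure). Hence $\Delta_G|_H=\Delta_H$. The subordinate claim that $\ker\Delta_G$ is unimodular is then immediate: its modular function is the restriction of $\Delta_G$, which is identically $1$ on $\ker\Delta_G$.
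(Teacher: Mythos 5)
The paper states this as a background \emph{Fact} without proof, so there is no in-paper argument to compare against; your write-up is a correct proof of these classical statements. Items (2)--(4) follow the standard computations with the modular function --- the homomorphism identity from $\mu(Xxy)=\Delta_G(y)\Delta_G(x)\mu(X)$, continuity via the integral representation $\Delta_G(x)=\int f(yx^{-1})\,d\mu(y)$, and the identification $d\mu=\Delta_G\,d\nu$ including the normalization $c=1$ obtained by shrinking symmetric neighborhoods of the identity (the one step that is often glossed over, and which you handle correctly). Item (1) is correctly reduced to Weil's criterion: a nonzero $G$-invariant Radon measure on $G/H$ exists iff $\Delta_G|_H=\Delta_H$, and for normal closed $H$ the Haar measure of the quotient group supplies such a measure; the unimodularity of $\ker\Delta_G$ then follows at once. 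The only caveat is that (1) imports the quotient-integration machinery wholesale, a heavier tool than the rest of the argument, but this is a legitimate and standard route to the fact.
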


\section{Proof of Theorem~\ref{thm: main}}

In this section, we prove our main theorem. 

\begin{proof}[Proof of Theorem~\ref{thm: main}]
Let $\rho \in \RR$ be such that
\begin{equation}\label{eq: first}
\frac{\nu(X)}{\nu(XY)}+\frac{\mu(Y)}{\mu(XY)}=1+\rho.
\end{equation}
We may assume $\rho>0$, as when $\rho\leq0$ the conclusion is immediate.
Now recall that the modular function $\Delta_G$ is continuous, so there is $x_0\in X$ and $y_0\in Y$ such that
\[
\Delta_G(x_0)=\max_{x\in X}\Delta_G(x)\quad\text{and}\quad\Delta_G(y_0)=\min_{y\in Y}\Delta_G(y)
\]
Set $\X=x_0^{-1}X$ and $\Y=Yy_0^{-1}$. Let $G_{\geq1}=\{x\in G: \Delta_G(x)\geq1\}$, and $G_{\leq1}=\{x\in G: \Delta_G(x)\leq1\}$. Then $\X\subseteq G_{\leq1}$, $\Y\subseteq G_{\geq1}$, $\id_G\in \X\cap \Y$. 
By the continuity of $\Delta_G$, both $G_{\geq1}$ and $G_{\leq1}$ are closed. 


Let $\Omega$ be the collection of pairs of sets $(X',Y')$ such that $X'$ is $\nu$-measurable, $Y'$ is $\mu$-measurable,
\[
\nu(X'\setminus (\X\Y\cap G_{\leq1}))=0, \quad \mu(Y'\setminus (\X\Y\cap G_{\geq1}))=0,
\]
and
\[
(\nu\times \mu)\{(x,y): x\in X', y\in Y', xy\notin \X\Y\}=0.
\]

The following claim tells us one can choose a pair of sets from $\Omega$ with the largest possible sum of measures of the two chosen sets. 
\begin{claim}\label{clm:1}
There is $(X_0,Y_0)\in\Omega$, such that for every other $(X',Y')\in \Omega$, either 
\[
\nu(X')+\mu(Y')<\nu(X_0)+\mu(Y_0),
\]
or $\nu(X')+\mu(Y')=\nu(X_0)+\mu(Y_0)$ and $\nu(X')\leq\nu(X_0)$. Moreover, we can arrange to also have $X_0\subseteq G_{\leq1}$, $Y_0\subseteq G_{\geq1}$, $X_0Y_0\subseteq\X\Y$, and $X_0,Y_0$ compact. \medskip

\noindent\emph{Proof of Claim~\ref{clm:1}.}
For any $(X',Y')\in\Omega$, we have
\[
\iint_{G\times G} \e_{X'}(x)\e_{Y'}(y)(1-\e_{\X\Y}(xy))\d\nu(x)\d\mu(y)=0.
\]
Let $\Gamma_\nu$ be the real vector space of equivalent classes of real valued $\nu$-measurable functions on $G$  having a finite $L^1$-norm where we identify two functions if they only differ on a  $\nu$-null set.
Likewise, let $\Gamma_\mu$ be the real linear vector space of all real valued $\mu$-measurable functions on $G$ having a finite $L^1$-norm where we identify two functions if they only differ on a  $\mu$-null set. We equip $\Gamma_\nu$ and $\Gamma_\mu$ with their weak topology associated to the $L^1$-norm; for definitions, see~\cite{weak}. Now we define $K_\nu,K_\mu$ the subsets of $\Gamma_\nu,\Gamma_\mu$ such that 
\[
K_\nu=\{[f]\in \Gamma_\nu: f(x)\in [0,1] \text{ for all }x, \text{ and } f(x)=0\text{ when } x\notin \X\Y\cap G_{\leq1}\},
\]
and
\[
K_\mu=\{[f]\in \Gamma_\mu: f(x)\in [0,1] \text{ for all }x, \text{ and } f(x)=0\text{ when } x\notin \X\Y\cap G_{\geq1}\}.
\]
In the above definitions, we use $[f]$ to denote the equivalent class of the  function $f$ in $\Gamma_\nu$ or $\Gamma_\mu$. Moving forward, we will abuse the notation and write $f$ instead of $[f]$ when talking about elements of $\Gamma_\nu$ or $\Gamma_\mu$.
It is easy to check that $K_\nu$ and $K_\mu$ are closed and sequentially compact subsets (with respect to the weak topology) of $\Gamma_\nu$ and $\Gamma_\mu$ respectively. Indeed, let $\{h_i\}$ be a sequence of functions in $K_\nu$ that is converging weakly to $h$ in $\Gamma_\nu$. Then for any $\e_E$ with $E\subseteq G$ being measurable, as functions in $K_\nu$ are bounded between $0$ and $1$, we have
\[
0\leq \langle h,\e_E\rangle_\nu=\lim_{i\to\infty}\langle h_i,\e_E\rangle_\nu \leq \nu(E),\quad\text{and}\quad \int_{G\setminus X^*Y^*\cap G_{\leq 1}}h\d\nu=0,
\]
and as usual, $\langle f,g\rangle_\nu:=\int fg\d\nu$. 
One can then see that $h\in K_\nu$, as otherwise, choose $E$ to be the set of $x$ with $h(x)\leq -\varepsilon$, or $h(x)\geq 1+\varepsilon$, one can derive a contradiction. This means $K_\nu$ is closed, same argument also works for $K_\mu$. It remains to show that $K_\nu$ is sequentially compact. Firstly, the fact that $\nu(X^*Y^*)<\infty$, implies that $\int |h|\d\nu<\infty$ for every $h\in K_\nu$. Secondly, note that whenever $\{E_i\}$ is a decreasing sequence of sets with empty intersection, 
\[
\lim_{i\to\infty}\langle h,\e_{E_i}\rangle_\nu =0 
\]
holds uniformly for $h\in K_\nu$. Those imply that $K_\nu$ is sequentially compact, and same argument also works for $K_\mu$.

Now we consider the space $\Gamma_\nu\times\Gamma_\mu$ equipped with the product topology. Then  $K_\nu\times K_\mu$ is a closed and sequentially compact  subset of $\Gamma_\nu\times\Gamma_\mu$. Set
\begin{equation}\label{eq: fgh}
\Psi(f,g)=\iint_{G\times G}f(x)g(y)(1-\e_{\X\Y}(xy))\d\nu(x)\d\mu(y).
\end{equation}
Now we will show that $\Psi(f,g)$ is  continuous on $K_\nu\times K_\mu$. 
As $\X\Y$ is measurable,   for each $\varepsilon$, there are finitely many bounded continuous functions $\phi_k$, $\psi_k$ for $1\leq k\leq N$, such that
\[
\iint_{G\times G}\left|(1-\e_{\X\Y}(xy))-\sum_{k=1}^N\phi_k(x)\psi_k(y)\right|\d\nu(x)\d\mu(y)<\varepsilon.
\]
On the other hand, since
\[
(f,g)\mapsto\sum_{k=1}^N\iint_{G\times G}f(x)g(y)\phi_k(x)\psi_k(y)\d\nu(x)\d\mu(y)
\]
is a continuous function on $\Gamma_\nu\times\Gamma_\mu$, hence $\Psi(f,g)$
 is a continuous function on $K_\nu\times K_\mu$. 

Let $\Lambda\subseteq K_\nu\times K_\mu$ be the collections of $(f,g)$ such that $\Psi(f,g)$ is $0$. It is again easy to check that $\Lambda$ is closed and sequentially compact. Moreover, as $(\e_{\X},\e_{\Y})\in \Lambda$, we have  $\Lambda\neq\varnothing$. 

Finally, let us consider the function
\[
\Phi(f,g)=\int_Gf(x)\d\nu(x)+\int_Gg(x)\d\mu(x).
\]
This is a continuous function on $\Gamma_\nu\times\Gamma_\mu$. As $\Lambda$ is sequentially compact, there is a nonempty subset $\Lambda'\subseteq \Lambda$ such that for every $(f',g')\in \Lambda'$,
\[
\Phi(f',g')=\max_{(f,g)\in\Lambda}\Phi(f,g). 
\]
As $\Lambda$ is nonempty, closed, and sequentially compact, there is $(f_0,g_0)\in\Lambda'$ such that $\int f_0\d\nu$ attains the maximum of $\int f\d\nu$ for all $f$ that $(f,g)\in\Lambda'$ for some $g\in\Gamma_\nu$. 

 Let $X_0=\supp(f_0)$ and $Y_0=\supp(g_0)$. One has $(X_0,Y_0)\in\Omega$, and for every $(X',Y')\in\Omega$ we have
\begin{align*}
\nu(X')+\mu(Y')&=\int_G\e_{X'}(x)\d\nu(x)+\int_G\e_{Y'}(x)\d\mu(x)\\
&\leq \int_Gf_0(x)\d\nu(x)+\int_Gg_0(x)\d\mu(x)\leq\nu(X_0)+\mu(Y_0). 
\end{align*}
When the equality holds in the above inequality, we have
\[
\nu(X')=\int_G\e_{X'}(x)\d\nu(x)\leq \int_Gf_0(x)\d\nu(x)\leq\nu(X_0).
\]

As $(X_0,Y_0)\in\Omega$, we have $\nu(X_0\setminus (\X\Y\cap G_{\leq1}))=0$, $\mu(Y_0\setminus (\X\Y\cap G_{\geq1}))=0$, and
\[
(\nu\times \mu)\{(x,y): x\in X_0, y\in Y_0, xy\notin \X\Y\}=0.
\]
Next we are going to modify $X_0$ and $Y_0$ to ensure that $X_0\subseteq G_{\leq1}$, $Y_0\subseteq G_{\geq1}$, and $X_0Y_0\subseteq \X\Y$. 

Let $\nu_{X_0}$ be the measure restricted to $X_0$, that is $\nu_{X_0}(Z)=\nu(X_0\cap Z)$ when $Z$ is measurable. Let $\supp(\nu_{X_0})$ be the support of the measure $\nu_{X_0}$, that is a set of elements $x$ in $G$ such that each open neighborhood $U$ of $x$ satisfies that $\nu_{X_0}( U)>0$, equivalently $\nu_{G}(X_0\cap U)>0$. We similarly define $\mu_{Y_0}$ and $\supp(\mu_{Y_0})$. Clearly 
\[
\nu(X_0)\leq \nu(\supp(\nu_{X_0}))\text{ and }\mu(Y_0)\leq \mu(\supp(\mu_{Y_0})).
\]
As $\X\Y$, $G_{\geq 1}$, and $G_{\leq 1}$ are closed, one can check that $(\supp(\nu_{X_0}),\supp(\mu_{Y_0}))$ is in $\Omega$. Hence by the maximality of $(X_0,Y_0)$, 
\[
\nu(X_0)= \nu(\supp(\nu_{X_0}))\quad\text{and}\quad\mu(Y_0)= \mu(\supp(\mu_{Y_0})). 
\]
By replacing $X_0$, $Y_0$ if necessary, we may assume that 
\begin{equation}\label{eq: supp}
X_0=\supp(\nu_{X_0})\quad\text{and}\quad Y_0=\supp(\mu_{Y_0}).     
\end{equation}
In particular, $X_0$ and $Y_0$ are closed.
Also with the  $(X_0,Y_0)\in \Omega$ knowledge, we get $X_0,Y_0$ are closed and hence compact subsets of $\X\Y$, $X_0\subseteq G_{\leq1}$, and $Y_0\subseteq G_{\geq1}$. Note that we also have $X_0Y_0\subseteq\X\Y$ since the product $xy$ is jointly continuous in $x$ and $y$. 
This proves the claim.
\end{claim}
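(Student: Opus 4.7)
The plan is to cast the existence of $(X_0, Y_0)$ as the maximization of a weakly continuous linear functional on a weakly compact subset of a function space, then recover the desired sets from supports of extremal functions. The key move is to view a pair $(X', Y') \in \Omega$ as a pair of indicator functions $(\e_{X'}, \e_{Y'})$, and then to relax ``indicator function'' to ``$[0,1]$-valued function'' in order to secure weak compactness.

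Concretely, I would work with the sets $K_\nu$ and $K_\mu$ of (equivalence classes of) $L^1$-functions valued in $[0,1]$ and supported on $\X\Y \cap G_{\leq 1}$ and $\X\Y \cap G_{\geq 1}$ respectively, endowed with the weak topology. Both sets are convex and norm-bounded, hence weakly compact. The third defining condition of $\Omega$ is captured by the vanishing of the bilinear functional
\[
\Psi(f,g) = \iint_{G \times G} f(x) g(y) \bigl(1 - \e_{\X\Y}(xy)\bigr) \d\nu(x) \d\mu(y).
\]
Assuming $\Psi$ is weakly continuous, $\Lambda := \{(f,g) \in K_\nu \times K_\mu : \Psi(f,g) = 0\}$ is weakly compact and nonempty (it contains $(\e_{\X}, \e_{\Y})$). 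The linear functional $\Phi(f,g) = \int f \d\nu + \int g \d\mu$ is weakly continuous, so it attains its maximum on $\Lambda$; a second application of compactness maximizes $f \mapsto \int f \d\nu$ on the submaximum layer, yielding $(f_0, g_0)$ realizing both the maximality and the tie-breaking condition.

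The main obstacle is proving weak continuity of $\Psi$, as bilinear functionals are not automatically continuous in the product weak topology. I would overcome this by approximating the kernel $1 - \e_{\X\Y}(xy)$ in $L^1(\nu \times \mu)$ by finite sums of tensor products $\sum_{k=1}^N \phi_k(x) \psi_k(y)$ with $\phi_k, \psi_k$ bounded and continuous; this is available because $\X\Y$ is compact (a continuous image of $\X \times \Y$) and by inner/outer regularity of the Haar measures together with a standard density argument. Each approximating functional is a finite linear combination of products of individually weakly continuous linear functionals, and the uniform $L^\infty$ bound of $1$ on $K_\nu \times K_\mu$ upgrades the $L^1$-approximation of the kernel to a uniform estimate on $\Psi$.

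Finally, given a maximizer $(f_0, g_0)$, I would set $X_0 = \supp(f_0)$ and $Y_0 = \supp(g_0)$, and then refine by replacing these with the topological supports of the restricted Haar measures $\nu_{X_0}$ and $\mu_{Y_0}$. These refinements preserve membership in $\Omega$ because $\X\Y \cap G_{\leq 1}$ and $\X\Y \cap G_{\geq 1}$ are closed, and they preserve measure by the maximality of $(f_0, g_0)$. The inclusions $X_0 \subseteq G_{\leq 1}$ and $Y_0 \subseteq G_{\geq 1}$ follow from closedness of $G_{\leq 1}$, $G_{\geq 1}$, while the pointwise containment $X_0 Y_0 \subseteq \X\Y$ follows from the vanishing of $\Psi$ together with openness of $G \setminus \X\Y$: any violator $(x,y) \in X_0 \times Y_0$ with $xy \notin \X\Y$ would lie in an open product neighborhood of positive $\nu \times \mu$ measure on which the kernel equals $1$, contradicting $\Psi(f_0, g_0) = 0$. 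Since $X_0, Y_0$ are closed subsets of the compact set $\X\Y$, they are compact.
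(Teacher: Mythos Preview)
Your proposal is correct and follows essentially the same route as the paper: relax to $[0,1]$-valued $L^1$ functions on the weakly compact sets $K_\nu, K_\mu$, prove weak continuity of $\Psi$ by approximating the kernel with finite tensor sums, maximize $\Phi$ (with the tie-break on $\int f\,\mathrm{d}\nu$) on the zero set $\Lambda$, and then pass to topological supports of the restricted measures to obtain compact $X_0, Y_0$ with $X_0Y_0\subseteq \X\Y$. Your explicit argument for the pointwise containment $X_0Y_0\subseteq \X\Y$ via openness of $G\setminus \X\Y$ is exactly what the paper's ``one can check'' is hiding.
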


Now we fix such a pair $(X_0,Y_0)$. Let us remark that the structures of $X_0$ and $Y_0$ might be very different from the structures of the original sets $X$ and $Y$, as $X_0,Y_0$ are obtained purely based on the product set $XY$. 
That means the later proof on the structures of $X_0$ and $Y_0$ provides no structural information on $X$ and $Y$. However,  understanding $X_0$ and $Y_0$ will help us to understand the structure of $\X\Y$ and thus $XY$.

We define $\Omega_0$ to be the collection of pairs of compact sets $(X',Y')$ such that 
\[
X'\subseteq \X\Y\cap G_{\leq1},\quad  Y'\subseteq \X\Y\cap G_{\geq1}, \text{ and } X'Y'\subseteq\X\Y.
\]
Clearly, $\Omega_0\subseteq\Omega$ and by Claim 1, $(X_0,Y_0)\subseteq \Omega_0$.

Let $H=X_0\cap Y_0$, then $H$ is compact, and belongs to $\ker\Delta_G$. 

\begin{claim}\label{clm:str}
$X_0H=X_0$ and $HY_0=Y_0$.
\medskip

\noindent\emph{Proof of Claim~\ref{clm:str}.}
Observe that, for every $(X',Y')\in\Omega_0$, for every $g\in X'\cap Y'$, we have the following property:
\[
(X'\cup X'g, Y'\cap g^{-1}Y')\in\Omega_0,\text{ and }(X'\cap X'g^{-1}, Y'\cup gY')\in\Omega_0.
\]
This is because
\[
(X'\cup X'g)(Y'\cap g^{-1}Y')\subseteq X'Y'\subseteq \X\Y,
\]
and $\Delta_G(g)=1$. Likewise for $(X'\cap X'g^{-1}, Y'\cup gY')$.

Now we fix $h\in H$, and consider pairs of sets
\[
(X_0\cup X_0h, Y_0\cap h^{-1}Y_0),\quad\text{and}\quad (X_0\cap X_0h^{-1}, Y_0\cup hY_0). 
\]
Note that both of the pairs are in $\Omega$. Using the definition of $\Omega$ and the fact that $(X_0\cup X_0h, Y_0\cap h^{-1}Y_0)$ is a pair of sets in $\Omega$, we have that either
\begin{equation}\label{eq: alternative}
   \nu(X_0\cup X_0h)+\mu(Y_0\cap h^{-1}Y_0)<\nu(X_0)+\mu(Y_0),
\end{equation}
or
\begin{equation}\label{eq: alternative2}
    \nu(X_0\cup X_0h)+\mu(Y_0\cap h^{-1}Y_0)=\nu(X_0)+\mu(Y_0),\quad\text{and}\quad \nu(X_0\cup X_0h)\leq \nu(X_0). 
\end{equation}
Observe that 
\[
\nu(X_0\cup X_0h)=\nu(X_0\setminus X_0h)+\nu(X_0h)=\nu(X_0\setminus X_0h)+\nu(X_0)
\]
as well as 
\[
\mu(Y_0\cap h^{-1}Y_0)=\mu(Y_0) - \mu(Y_0\setminus h^{-1}Y_0),
\]
from \eqref{eq: alternative} and \eqref{eq: alternative2} we have either
\[
\nu(X_0\setminus X_0h)<\mu(Y_0\setminus h^{-1}Y_0),
\]or
\[
\nu(X_0\setminus X_0h)=\mu(Y_0\setminus h^{-1}Y_0)\quad\text{and}\quad \nu(X_0\cup X_0h)\leq \nu(X_0). 
\]

Also by Claim~\ref{clm:1}, as $(X_0\cap X_0h^{-1}, Y_0\cup hY_0)\in\Omega$, we have either
\[
\nu(X_0\setminus X_0h)>\mu(Y_0\setminus h^{-1}Y_0),
\]
or
\[\nu(X_0\setminus X_0h)=\mu(Y_0\setminus h^{-1}Y_0)\quad\text{and}\quad \nu(X_0\cap X_0h)\leq \nu(X_0). 
\]
Hence, the only possibility is $\nu(X_0\setminus X_0h^{-1})=\mu(Y_0\setminus h^{-1}Y_0)=0$. 

It remains to show that both $X_0\setminus X_0h^{-1}$ and $Y_0\setminus h^{-1}Y_0$ are empty. Suppose $\widetilde{x}\in X_0\setminus X_0h^{-1}$, and hence $\widetilde{x}h\notin X_0$. As $X_0$ is compact, one can find an open neighborhood $U$ of $\widetilde{x}$ such that $Uh\cap X_0=\varnothing$. This implies $x_0\widetilde{x}\notin\supp(\nu_{X_0})$, contradicts the fact that $X_0=\supp(\nu_{X_0})$. Likewise, $Y_0\setminus h^{-1}Y_0$ is empty.
\end{claim}

Using Claim~\ref{clm:str}, we are going to show that $H$ is in fact a compact subgroup. 

\begin{claim}\label{clm:gp}
$H$ is a compact subgroup.\medskip

\noindent\emph{Proof of Claim~\ref{clm:gp}.} It suffices to show that $H$ is a subgroup. By Claim~\ref{clm:str}, as $H=X_0\cap Y_0$, for every $h_1,h_2\in H$, we have $h_1h_2\in X_0h_2= X_0$, and similarly $h_1h_2\in h_1Y_0= Y_0$. Hence $h_1h_2\in X_0\cap Y_0=H$. 

Let $h$ be in $H$. It is easy to see that $hH\subseteq H$ is compact and closed under multiplication. Consider the collection $\mathscr{C}$ of all nonempty compact subsets of $hH$ which is closed under multiplication. Ordering $\mathscr{C}$ by inclusion, then every chain in $\mathscr{C}$ has a lower bound in $\mathscr{C}$ by compactness. By Zorn's lemma, $hH$ contains a minimal nonempty compact subset $H'$ which is closed under  multiplication. As $H'$ is minimal, for every $h'\in H'$, $h'H'=H'h'=H'$. Thus $H'$ contains the identity and $h'^{-1}$ for every $h'\in H'$, hence it is a group. Since $H'\subseteq hH$, this implies that $\id_G\in hH$, hence $h^{-1}\in H$, which implies that $H$ is a group. 
\end{claim}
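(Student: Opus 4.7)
The plan is to deduce closure of $H$ under both multiplication and inversion from the compactness of $H$, from the stabilizer identities $X_0H=X_0$ and $HY_0=Y_0$ established in Claim~\ref{clm:str}, and from the defining equation $H=X_0\cap Y_0$.

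First I would handle closure under multiplication, which is the easy half. Given $h_1,h_2\in H$, I use $h_2\in H$ and $h_1\in H\subseteq X_0$ to write $h_1h_2\in X_0h_2\subseteq X_0H=X_0$; symmetrically, using $h_1\in H$ and $h_2\in H\subseteq Y_0$, I get $h_1h_2\in h_1Y_0\subseteq HY_0=Y_0$. Hence $h_1h_2\in X_0\cap Y_0=H$, so $H$ is a compact topological semigroup, which moreover sits inside $\ker\Delta_G$.

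The hard part will be producing inverses, for which I would invoke a classical compactness argument. Fix $h\in H$; the set $hH$ is compact (continuous image of $H$) and, by the first paragraph, closed under multiplication. Let $\mathscr{C}$ denote the collection of nonempty compact subsets of $hH$ that are closed under multiplication. Ordering $\mathscr{C}$ by reverse inclusion, every descending chain has a nonempty intersection by the finite intersection property for compact sets, and this intersection still lies in $\mathscr{C}$; so Zorn's lemma yields a minimal element $H'\in\mathscr{C}$. For any $h'\in H'$, the sets $h'H'$ and $H'h'$ are nonempty compact subsemigroups of $H'$, so minimality forces $h'H'=H'h'=H'$. This means that in $H'$ both left and right translations are surjective, which (in a cancellative setting, automatic here since $H'$ sits inside the group $G$) makes $H'$ a group.

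The final step is to observe that $H'\subseteq hH$ contains the identity $\id_G$ of $G$ (as any group contained in $G$ must have $G$'s identity as its own), so $\id_G=hh^\ast$ for some $h^\ast\in H$, giving $h^{-1}=h^\ast\in H$. Combined with closure under multiplication, this shows $H$ is a subgroup of $G$; since $H$ was already compact, it is a compact subgroup, as claimed. The only subtlety I anticipate is verifying that Zorn's lemma applies correctly — specifically that the intersection of a descending chain of compact semigroups is itself a nonempty compact semigroup — but this is immediate from compactness and the fact that the semigroup property is preserved under arbitrary intersection.
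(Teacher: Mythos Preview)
Your proposal is correct and follows essentially the same approach as the paper's own proof: closure under multiplication via the stabilizer identities $X_0H=X_0$, $HY_0=Y_0$, and inverses via a Zorn's-lemma argument on compact subsemigroups of $hH$ to produce a minimal compact subsemigroup $H'$ with $h'H'=H'h'=H'$, hence a group containing $\id_G$. Your write-up is slightly more explicit about why the chain condition holds and why $H'$ is a group, but there is no substantive difference.
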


Note that as $X_0\subseteq G_{\geq1}$ and $Y_0\subseteq G_{\leq1}$, we have
\begin{align}\label{eq: nu(X)}
    \mu(X_0)=\int_G\e_{X_0}(x)\d\mu=\int_G\Delta_G(x)\e_{X_0}(x)\d\nu\geq\nu(X_0),
\end{align}
and
\begin{align}\label{eq: mu(Y)}
    \nu(Y_0)=\int_G\e_{Y_0}(x)\d\nu=\int_G\Delta_G^{-1}(x)\e_{Y_0}(x)\d\mu\geq\mu(Y_0). 
\end{align}

The next claim shows that $H$ is large.

\begin{claim}\label{clm:size}
Let 
\[
\kappa=\frac{(\nu(\X)+\mu(\Y))\nu(\X\Y)\mu(\X\Y)}{\nu(\X)\mu(\X\Y)+\mu(\Y)\nu(\X\Y)}.
\]
Then $\mu(H)\geq \rho\kappa$.
\medskip

\noindent\emph{Proof of Claim~\ref{clm:size}.} 
Since $X_0\cup Y_0\subseteq X_0Y_0$, by the inclusion–exclusion principle, as well as \eqref{eq: nu(X)} and \eqref{eq: mu(Y)},
\begin{equation}\label{eq: small mu}
\mu(H)\geq \mu(X_0)+\mu(Y_0)-\mu(X_0Y_0)\geq \nu(X_0)+\mu(Y_0)-\mu(X_0Y_0),
\end{equation}
and 
\begin{equation}\label{eq: small nu}
\nu(H)\geq \nu(X_0)+\nu(Y_0)-\nu(X_0Y_0)\geq \nu(X_0)+\mu(Y_0)-\nu(X_0Y_0).
\end{equation}
Since $H$ is a group, by the choice of $\nu$, we have $\mu(H)=\nu(H^{-1})=\nu(H)$. As
\[
\frac{1}{\kappa}=\frac{\nu(\X)}{\nu(\X)+\mu(\Y)}\frac{1}{\nu(\X\Y)}+\frac{\mu(\Y)}{\nu(\X)+\mu(\Y)}\frac{1}{\mu(\X\Y)},
\]
we have $\min\{\nu(\X\Y),\mu(\X\Y)\}\leq\kappa\leq \max\{\nu(\X\Y),\mu(\X\Y)\}$. This in particularly implies that \[
\kappa\geq\min\{\nu(X_0Y_0),\mu(X_0Y_0)\}.
\]

Therefore by Claim~\ref{clm:1} and \eqref{eq: small mu}, \eqref{eq: small nu},
\begin{align*}
\frac{\mu(H)}{\kappa}&\geq \frac{\nu(X_0)}{\kappa}+\frac{\mu(Y_0)}{\kappa}-1\geq \frac{\nu(X^*)}{\kappa}+\frac{\mu(Y^*)}{\kappa}-1\\
&=\frac{\nu(\X)}{\nu(\X\Y)}+\frac{\mu(\Y)}{\mu(\X\Y)}-1=\rho,
\end{align*}
and this proves the claim.
\end{claim}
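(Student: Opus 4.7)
The plan is to combine inclusion--exclusion with the maximality from Claim~\ref{clm:1} and a weighted-harmonic-mean interpretation of $\kappa$.

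First, since $\id_G\in H=X_0\cap Y_0$, we have $X_0\cup Y_0\subseteq X_0Y_0$, so inclusion--exclusion applied to both Haar measures yields
\[
\mu(H)\geq \mu(X_0)+\mu(Y_0)-\mu(X_0Y_0),\qquad \nu(H)\geq \nu(X_0)+\nu(Y_0)-\nu(X_0Y_0).
\]
By Claim~\ref{clm:gp}, $H$ is a compact subgroup of $\ker\Delta_G$, so $\mu(H)=\nu(H)$. Using the identity $\d\mu=\Delta_G\d\nu$ from Fact~2(4) together with the known location of $X_0$ and $Y_0$ relative to $G_{\leq 1}$ and $G_{\geq 1}$, one can compare $\mu(X_0)$ with $\nu(X_0)$ and $\mu(Y_0)$ with $\nu(Y_0)$; choosing the inclusion--exclusion inequality whose modular comparison points the useful way gives
\[
\mu(H)\geq \nu(X_0)+\mu(Y_0)-\min\{\mu(X_0Y_0),\nu(X_0Y_0)\}.
\]

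Next, I would invoke Claim~\ref{clm:1}: since $(\X,\Y)\in\Omega$, maximality of $(X_0,Y_0)$ forces $\nu(X_0)+\mu(Y_0)\geq \nu(\X)+\mu(\Y)$. The quantity $\kappa$ is by construction the weighted harmonic mean of $\nu(\X\Y)$ and $\mu(\X\Y)$ with weights proportional to $\nu(\X)$ and $\mu(\Y)$; this simultaneously yields $\kappa\geq\min\{\nu(\X\Y),\mu(\X\Y)\}\geq\min\{\nu(X_0Y_0),\mu(X_0Y_0)\}$ (using $X_0Y_0\subseteq \X\Y$) and the algebraic identity
\[
\frac{\nu(\X)+\mu(\Y)}{\kappa}=\frac{\nu(\X)}{\nu(\X\Y)}+\frac{\mu(\Y)}{\mu(\X\Y)}=1+\rho.
\]

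Dividing the inclusion--exclusion bound by $\kappa$ and chaining these two observations gives
\[
\frac{\mu(H)}{\kappa}\geq \frac{\nu(X_0)+\mu(Y_0)}{\kappa}-1\geq \frac{\nu(\X)+\mu(\Y)}{\kappa}-1=\rho,
\]
which is the desired $\mu(H)\geq \rho\kappa$. The only real subtlety lies in the first step: one must pick the correct one of the two inclusion--exclusion inequalities so that the modular conversion on $X_0$ (or $Y_0$) improves rather than worsens the bound, and then balance which of $\mu(X_0Y_0),\nu(X_0Y_0)$ is controlled by $\kappa$. Once that bookkeeping is done, everything else is algebraic manipulation resting on Claims~\ref{clm:1}, \ref{clm:str}, and~\ref{clm:gp}.
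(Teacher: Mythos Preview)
Your proposal is correct and follows essentially the same route as the paper: inclusion--exclusion in both $\mu$ and $\nu$, the modular comparisons coming from $X_0\subseteq G_{\leq1}$, $Y_0\subseteq G_{\geq1}$ together with $\mu(H)=\nu(H)$, the harmonic-mean interpretation of $\kappa$ to dominate $\min\{\mu(X_0Y_0),\nu(X_0Y_0)\}$, and the maximality from Claim~\ref{clm:1} to pass from $\nu(X_0)+\mu(Y_0)$ to $\nu(\X)+\mu(\Y)$. The only difference is packaging: the paper records the two inclusion--exclusion bounds separately as \eqref{eq: small mu} and \eqref{eq: small nu} rather than immediately taking the minimum as you do.
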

Let us first verify that Claim~\ref{clm:size} is enough to derive \eqref{eq: main thm 1,1}. By elementary computation, together with the definition of $\rho$, \eqref{eq: main thm 1,1} is equivalence to
\begin{align*}
\mu(H)&\geq\frac{\rho(\alpha\nu(X)+\beta^{-1}\mu(Y))}{\rho+1}=\frac{\rho(\nu(X^*)+\mu(Y^*))}{\frac{\nu(X)}{\nu(XY)}+\frac{\mu(Y)}{\mu(XY)}}\\
&=\frac{\rho(\nu(X^*)+\mu(Y^*))}{\frac{\nu(X^*)}{\nu(X^*Y^*)}+\frac{\mu(Y^*)}{\mu(X^*Y^*)}}=\rho\kappa,
\end{align*}
which is proven in the claim. 

Also, since $H=X_0\cap Y_0\subseteq X^* Y^*\subseteq x_0^{-1}XYy_0^{-1}$, we have $y_0^{-1}Hy_0\subseteq y_0^{-1}x_0^{-1}XY$, and $x_0Hx_0^{-1}\subseteq XYy_0^{-1}x_0^{-1}$. Thus
\[
\mu(H)\leq\min\{\Delta_G(y_0)^{-1}\mu(XY), \Delta_G(x_0)\nu(XY)\}.
\] 
As $XY\subseteq E$, the result follows. 

Let us also remark that $H$ is proper as it is compact and inside $\ker(\Delta_G)$. 
\end{proof}

\section{Concluding Remarks}

In~\cite{Kneser}, Kneser proved a stronger result than the abelian version of Theorem~\ref{thm: main}: If $G$ is a locally compact abelian group equipped with a Haar measure $\mu$, and $X,Y$ are nonempty compact subsets of $G$, then there is an open subgroup $H$ such that
\begin{enumerate}[\rm (i)]
    \item $\mu(XY)\geq\mu(X)+\mu(Y)-\mu(H)$.
    \item $XY=XYH$.
\end{enumerate}
It was shown by Olson~\cite{olson} that the statement (ii) in the Kneser theorem cannot be extended to nonabelian groups, even if we replace it by a weaker condition that either $XY=XYH$, or $XY=XHY$, or $XY=HXY$. 

In a recent breakthrough, DeVos~\cite{Devos} characterized finite sets $X,Y$ in a possibly nonabelian group with $|XY|<|X|+|Y|$. As a corollary of his main result, he obtained a generalization of the Kneser theorem to \emph{discrete groups}, with a weakening version of statement (ii):
\[
\text{ for every }g\in XY,\text{ there is }z\in G\text{ such that }g(zHz^{-1})\subseteq XY.
\]

It would be very interesting if such a result can be obtained for general locally compact groups. However, the following example given by Kemperman~\cite{Kemperman64} suggested the problem will be difficult:

\begin{example}\label{eg}
Let $H\leq\ker\Delta_G$ be a compact group. Let $X$ be an arbitrary compact subset of $H$, and $Y=H\cup Wx$ where $W$ is an arbitrary compact set with $H\cap Wx=\varnothing$. Assume $\Delta_G(x)$ is sufficiently small, then we always have
\[
\frac{\nu(X)}{\nu(XY)}+\frac{\mu(Y)}{\mu(XY)}=\frac{\nu(X)}{\nu(H)+\nu(XW)}+\frac{\mu(H)+\mu(W)\Delta_G(x)}{\mu(H)+\mu(XW)\Delta_G(x)}>1.
\]
As $X,W$ are chosen arbitrarily, the structure of $XW$ is hard to control.
\end{example}

On the other hand, from our proof of Theorem~\ref{thm: main}, we have a structural control on the ``majority'' of $XY$: 
\begin{proposition}\label{prop}
Let $G$ be a locally compact group, $\mu$ a left Haar measure on $G$, $\nu=\mu_{-1}$ the corresponding right Haar measure on $G$, and $\Delta_G:G\to\RR^{>0}$ is the modular map. Suppose $X,Y$ are nonempty compact subsets of $G$, and set $\alpha=\sup_{x\in X}\Delta_G(x)$, $\beta=\inf_{y\in Y}\Delta_G(y)$. Then
\begin{enumerate}[\rm (i)]
    \item there is an open subgroup $H$ of $\ker\Delta_G$ such that \[
\min\Bigg\{\left(\frac{\nu(X)}{\nu(XY)}+\frac{\mu(Y)}{\mu(XY)}\right)\left(1-\frac{\mu(H)}{\alpha\nu(X)+\beta^{-1}\mu(Y)}\right), \frac{\mu(G)}{\mu(XY)}\Bigg\}\leq 1,
\]
\item there is $D\subseteq XY$ with 
\begin{align*}
&\,\min\{\beta^{-1}\mu(D),\alpha\nu(D)\}\\\leq&\, \mu(H)-\left(\frac{\nu(X)}{\nu(XY)}+\frac{\mu(Y)}{\mu(XY)}-1\right)\frac{(\alpha\nu(X)+\beta^{-1}\mu(Y))\nu(XY)\mu(XY)}{\nu(X)\mu(XY)+\mu(Y)\nu(XY)}
\end{align*}
such that the following hold:
for every $g\in XY\setminus D$, there exists $z\in G$ such that $g(zHz^{-1})\subseteq XY$ with the above $H$.
\end{enumerate}
\end{proposition}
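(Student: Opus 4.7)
The plan is to reuse the maximizing pair $(X_0, Y_0)$ from the proof of Theorem~\ref{thm: main}, together with the shifts $x_0, y_0$, and to take $H := X_0 \cap Y_0$. The first step is to verify that in the nontrivial range $\rho > 0$ this $H$ is actually open in $G$: we have $H \subseteq \ker\Delta_G$ and $\mu(H) \geq \rho\kappa > 0$ by Claim~\ref{clm:size}, and a closed subgroup of a locally compact group with positive Haar measure is automatically open by a standard Steinhaus-type argument (the set $HH^{-1} = H$ contains a neighborhood of $\id_G$). The case $\rho \leq 0$ is trivial exactly as in the main proof, the first factor in the $\min$ being already at most $1$ and the choice of open subgroup immaterial.

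For part (i), I would combine $\mu(H) \geq \rho\kappa$ with the identity $\alpha\nu(X) + \beta^{-1}\mu(Y) = \nu(\X) + \mu(\Y) = \kappa(1+\rho)$, which falls out of unpacking the definition of $\kappa$ in Claim~\ref{clm:size}. This gives $\mu(H)/(\alpha\nu(X) + \beta^{-1}\mu(Y)) \geq \rho/(1+\rho)$, so the first term in the $\min$ in (i) is at most $(1+\rho) \cdot \tfrac{1}{1+\rho} = 1$, which is the desired conclusion.

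For part (ii), I would take $D := XY \setminus x_0 X_0 Y_0 y_0$. For any $g = x_0 a b y_0 \in x_0 X_0 Y_0 y_0$ with $a \in X_0$, $b \in Y_0$, setting $z := (b y_0)^{-1}$ yields $g(zHz^{-1}) = x_0 a H b y_0 \subseteq x_0 X_0 Y_0 y_0 \subseteq XY$, where the first inclusion uses $H Y_0 = Y_0$ from Claim~\ref{clm:str}. For the size bound I would compute $\beta^{-1}\mu(D) = \mu(\X\Y) - \mu(X_0 Y_0)$ and $\alpha\nu(D) = \nu(\X\Y) - \nu(X_0 Y_0)$ (using how $x_0, y_0$ scale $\mu$ and $\nu$), then combine the inclusion–exclusion bounds $\mu(X_0 Y_0), \nu(X_0 Y_0) \geq \nu(X_0) + \mu(Y_0) - \mu(H)$ from Claim~\ref{clm:size} (together with $\nu(H) = \mu(H)$), the maximality bound $\nu(X_0) + \mu(Y_0) \geq \nu(\X) + \mu(\Y) = \kappa(1+\rho)$ from Claim~\ref{clm:1}, and the estimate $\min\{\mu(\X\Y), \nu(\X\Y)\} \leq \kappa$ from Claim~\ref{clm:size}; the result is $\min\{\beta^{-1}\mu(D), \alpha\nu(D)\} \leq \kappa - \kappa(1+\rho) + \mu(H) = \mu(H) - \rho\kappa$, matching the stated bound once $\kappa$ is identified with the fraction displayed in (ii).

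The main work, but not a true conceptual obstacle, is bookkeeping: keeping track of how the shifts $x_0, y_0$ and their modular weights $\alpha, \beta$ interact with $\mu$ and $\nu$ when passing between $X, Y$ and $\X, \Y$, and verifying that the quantity $\kappa$ appearing throughout the proof of Theorem~\ref{thm: main} equals $\frac{(\alpha\nu(X)+\beta^{-1}\mu(Y))\nu(XY)\mu(XY)}{\nu(X)\mu(XY)+\mu(Y)\nu(XY)}$. All ideas needed are already present in the proof of the main theorem.
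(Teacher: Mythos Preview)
Your proposal is correct and follows the paper's own approach essentially verbatim: the paper also takes $H = X_0 \cap Y_0$, sets $D = XY \setminus x_0 X_0 Y_0 y_0$, uses Claim~\ref{clm:str} to obtain the coset inclusion for $g \in XY \setminus D$, and bounds $\min\{\beta^{-1}\mu(D),\alpha\nu(D)\}$ by the same chain of inequalities ending at $\mu(H)-\rho\kappa$. If anything you are slightly more thorough, since you explicitly supply the Steinhaus argument for why $\mu(H)>0$ forces $H$ to be open and you spell out the derivation of part~(i) from $\mu(H)\geq\rho\kappa$ and $\alpha\nu(X)+\beta^{-1}\mu(Y)=\kappa(1+\rho)$, both of which the paper leaves implicit.
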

\begin{proof}
Let $X^*,Y^*,X_0,Y_0,H$ be the sets defined in the proof of Theorem~\ref{thm: main}. Set $D=XY\setminus x_0X_0Y_0y_0$. Let $a\in x_0X_0$, $b\in Y_0y_0$, and $g=ab$. Note that we have  $H\subseteq X_0Y_0\subseteq x_0^{-1} XY y_0^{-1}$, where $x_0\in X_0$ and $y_0\in Y_0$.  Then \[
g^{-1}XY=b^{-1}a^{-1}XY= b^{-1}a^{-1}x_0X^*Y^*y_0\supseteq b^{-1}a^{-1}x_0X_0Y_0y_0.
\]
By Claim~\ref{clm:str}, $Hby_0^{-1}\subseteq Y_0$. As $\id_G\in a^{-1}x_0X_0$, $b^{-1}\in b^{-1}a^{-1}x_0X_0$. Thus $b^{-1}Hb\subseteq b^{-1}a^{-1}x_0X_0Y_0y_0\subseteq g^{-1}XY$. 

It remains to show that $ D$ has ``small measure''. Without loss of generality, we assume $\mu(\X\Y)=\min\{\mu(\X\Y),\nu(\X\Y)\}$. Let $\kappa$ be as in Claim~\ref{clm:size}. Then $\mu(\X\Y)-\kappa\leq 0$. By \eqref{eq: small mu} and \eqref{eq: small nu}, we have
\begin{align*}
  \frac{\mu(X^*Y^*)-\mu(x_0^{-1}Dy_0^{-1})}{\kappa}=\frac{\mu(X_0Y_0)}{\kappa}&\geq \frac{\nu(\X)}{\kappa}+\frac{\mu(\Y)}{\kappa}-\frac{\mu(H)}{\kappa}
  \\&\geq \frac{\nu(\X)}{\nu(\X\Y)}+\frac{\mu(\Y)}{\mu(\X\Y)}-\frac{\mu(H)}{\kappa}.
\end{align*}
This implies
\[
\beta^{-1}\mu(D)=\mu(x_0^{-1}Dy_0^{-1})\leq \mu(H)-\rho\kappa+\mu(\X\Y)-\kappa=\mu(H)-\rho\kappa
\]
with $\rho$ defined in \eqref{eq: first}. This proves statement (ii).
\end{proof}

In Example~\ref{eg}, while $\mu(XWx)$ is small, $\nu(XWx)=\nu(XW)$ can be very large. It suggests that one of the $\mu(D)$ and $\nu( D)$ can be large, hence to get an upper bound on $\min\{\mu( D),\nu( D)\}$ (as in Proposition~\ref{prop}) might be the best thing we can get. Nevertheless, we believe the bound in Proposition~\ref{prop} is not sharp. In fact, we conjecture that when $G$ is unimodular, $D=\varnothing$.

Another direction is to consider the inverse problems. The Vosper theorem~\cite{Vosper} and the Freiman $2.4k$ theorem~\cite{freiman} characterize subsets of $\ZZ/p\ZZ$ where the equality in the Cauchy--Davenport theorem happens or nearly happens. For unimodular groups, the corresponding questions of characterizing
\[
\mu(XY)=\mu(X)+\mu(Y),\quad\text{or} \quad \mu(XY)\leq\mu(X)+\mu(Y)+\delta,
\]
were asked by Griesmer~\cite{Griesmer}, by Kemperman~\cite{Kemperman64}, and by Tao~\cite{Tao}. The answer for connected locally compact groups were recently obtained by the authors~\cite{JT21} for compact groups and by An, Zhang, and the authors~\cite{AJTZ} for noncompact groups.

One can also ask the similar questions when $G$ contains  subgroups of finite positive measure, and the equality in Theorem~\ref{thm: main} nearly happens: suppose $G$ is unimodular, $H\leq G$,  when will we have
\[
\mu(XY)\leq \mu(X)+\mu(Y)-\mu(H)+\delta?
\]
If $G$ is not unimodular, when will we have
\[
\frac{\nu(X)}{\nu(XY)}+\frac{\mu(Y)}{\mu(XY)}-\frac{\mu(H)}{\min\{\mu(XY),\nu(XY)\}}\geq 1-\delta?
\]
For nonunimodular $G$, the special case when $\mu(XY)=\mu(Y)$ is characterized by Macbeath~\cite{macbeath}.

\section*{Acknowledgements}
The authors thank Shukun Wu for reading the first draft of the manuscript and making many useful comments, and the anonymous referee for their many helpful corrections and suggestions.

\bibliographystyle{amsplain}
\bibliography{reference}

\end{document}